\documentclass[reqno,preprint,12pt]{amsart}
\usepackage{enumerate}

\usepackage[colorlinks=true]{hyperref}
\hypersetup{urlcolor=blue, linkcolor=blue, citecolor=red}

\usepackage[latin9]{inputenc} 
\usepackage[english]{babel}

\newtheorem{theo}{Theorem}[section]

\theoremstyle{definition}


\newcommand\MScN[1]{\href{http://www.ams.org/mathscinet-getitem?mr=#1}{\nolinkurl{(#1)}}}

\newcommand\DOI[1]{\href{http://dx.doi.org/#1}{(doi: \nolinkurl{#1})}}

\newcommand\LINK[1]{\href{#1}{(link: \nolinkurl{#1})}}

\title[Stokes problem with Navier boundary condition]{Maximal $L^p$-$L^q$ regularity for the Stokes problem with Navier-type boundary conditions}
\author{Hind Al Baba}
\address{Institute of Mathematics of the Czech Academy of Sciences, Zitna 25, 115 67   Praha 1 
Czech Republic, and  Laboratoire de Math\'ematiques et de leurs applications-Pau, UMR, CNRS 5142, Batiment IPRA, Universit\'e de Pau et des pays de L'Adour, Avenue de L'universit\'e, Bureau 012, BP 1155, 64013 Pau cedex, France}
\email{albaba@math.cas.cz}
\email{hind.albaba@univ-pau.fr}

\author{Ch\'erif Amrouche}
\address{Laboratoire de Math\'ematiques et de leurs applications-Pau, UMR, CNRS 5142, Batiment IPRA, Universit\'e de Pau et des pays de L'Adour, Avenue de L'universit\'e, Bureau 225, BP 1155, 64013 Pau cedex, France}
\email{cherif.amrouche@univ-pau.fr}

\author{Miguel Escobedo}
\address{Departamento de Matem\'aticas
Facultad de Ciencias y Tecnolog\'ia
Universidad del Pa\'is Vasco
(UPV/EH), Apartado 644, Bilbao 48080, Spain}
\email{ miguel.escobedo@ehu.es}
\thanks{}
\begin{document}
\maketitle

\vspace{-0.5cm}
\begin{center}
\today
\end{center}

\begin{abstract} Maximal $L^p$-$L^q$ regularity is proved for the strong, weak and very weak  solutions of the inhomogeneous Stokes problem with Navier-type boundary conditions in a bounded domain $\Omega$, not necessarily simply connected.  This extends previous results of the authors (2017).
\end{abstract}

\vspace{0.25cm}

\noindent\textbf{Keyword.} inhomogeneous Stokes Problem, Navier-type boundary conditions, Maximal regularity.

\noindent MSC[2008] 35B65, 35D30, 35D35, 35K20, 35Q30,  76D05,  76D07, 76N10.

\section{Introduction}\label{Introduction}
We consider in this paper the maximal $L^p$-$L^q$ regularity for the following Stokes problem with  slip frictionless boundary conditions involving the tangential component of the velocity vortex instead of the stress tensor :
\begin{eqnarray}
\frac{\partial\textbf{u}}{\partial t} - \Delta \textbf{u
}+\nabla\pi=\textbf{f}, & \mathrm{div}\,\textbf{u}=0 \qquad\qquad\textrm{in}\,\,\,
\Omega\times (0,T),\label{StokesProblem} \\
\textbf{u}\cdot\textbf{n}=0, & \mathrm{\textbf{curl}}\,\textbf{u}\times \textbf{n} = \textbf{0}\qquad\textrm{on}\,\,\,
\Gamma\times (0,T),\label{Navierbc}\\
\textbf{u}(0)= \textbf{0}& \qquad \qquad\textrm{in} \,\,\,
\Omega, \label{u0}
\end{eqnarray} 
where $\Omega$ is a bounded domain of $\mathbb{R}^{3}$ of class $C^{2,1}$ not necessarily simply-connected, $\Gamma $ is its boundary and $\textbf{n}$ is the exterior unit normal vector on $\Gamma $. The unknowns  $\textbf{u}$ and $\pi$ denote respectively the velocity field and the pressure of a fluid occupying the domain $\Omega$, while $\textbf{u}(0)$ and $\textbf{f}$ represent respectively the given initial velocity and the external force.  Under these conditions, the Stokes problem has a non trivial kernel  $ \textbf{K}_{\tau}(\Omega)$ (see (\ref{noinailb}) below).

Given a  Cauchy-Problem of the form:
\begin{equation}\label{abstcauchprob}
\left\{
\begin{array}{cc}
\frac{\partial u}{\partial t}\,+\,\mathcal{A}\,u(t)=\textit{f}(t)& 0\leq t\leq T\\
u(0)=0,&
\end{array}
\right.
\end{equation}
where $-\mathcal{A}$ is the infinitesimal generator of a semi-group $e^{-t\mathcal A}$ on a Banach space $X$  and $\textit{f}\in L^{p}(0,T;\,X)$,
we say that a solution $u$ satisfies  the maximal $L^p$-$L^q$ regularity if
\begin{equation}\label{regulacp}
u\in W^{1,p}(0,T;\,X)\cap L^{p}(0,T;\,D(\mathcal{A})).
\end{equation}
It is known that the analyticity of $e^{-t\mathcal A}$ is not enough  to ensure that property to be satisfied, although it is enough when  $X$ is a Hilbert space  (cf. \cite{DV}, \cite{CL}).

When $1<p, q<\infty$, the  maximal $L^p$-$L^q$ regularity  has been proved by the authors in \cite{ARMA} for solutions to (\ref{StokesProblem})-(\ref{u0})  lying in the orthogonal of the kernel $ \textbf{K}_{\tau}(\Omega)$. In terms of the abstract example (\ref{abstcauchprob}), the main argument of the proof, based on the use of  the results of \cite{Geissert}, was to show that the  pure imaginary powers of  $(I+\mathcal A)$ are suitably bounded operators, and deduce that  so where the imaginary powers of  $\mathcal A$. That  could only  be done assuming the operator $\mathcal A$ to be invertible, but that is not the case of the Stokes operator on a non simply-connected domain, with boundary conditions  (\ref{Navierbc}). The maximal regularity result was then proved only for  the restriction of the Stokes operator to the kernel's orthogonal, where it was of course invertible. The purpose of the present work is to extend that result to the solutions of (\ref{StokesProblem})-(\ref{u0}) that do not  necessarily lie in the orthogonal of $ \textbf{K}_{\tau}(\Omega)$. The idea is to decompose the solution as an element of the kernel and an element of its orthogonal and to apply the result of  \cite{ARMA}.

We are  interested in three different types of solutions for   (\ref{StokesProblem})-(\ref{u0}). The first, that we call strong solutions, are solutions $\textbf{u}$ that belong to $L^p(0, T, \textbf{L}^q(\Omega))$ type spaces. The second, called weak solutions, are solutions (in a suitable sense)  $\textbf{u}(t)$ that may be writen for a.e. $t>0$, as $\textbf{u}(t)=\textbf{v}(t)+\nabla w(t)$ where 
$\textbf{v}(t)\in L^p(0, T;  \textbf{L}^{q}(\Omega)))$ and $w\in L^p(0, T; L^{q}(\Omega) )$. The third and last, called very weak, are  solutions $\textbf{u}(t)$ that may be decomposed as before but where now $w\in L^p(0, T; W^{-1,\, q}(\Omega) )$ (cf \cite{ARMA} for more details). Of course, these different types of solutions  correspond to data $\textbf{u}(0)$ and $\textbf{f}$ with different regularity properties.

There is a wide literature on the maximal regularity for the Stokes problem with different type of boundary conditions and different domains. Among the firsts articles on this problem
we may mention  \cite{Solonnikov} by  V. A. Solonnikov.  The works by Y. Giga and H. Sohr  \cite{GiGa3, GiGa4} consider that question for the Stokes problem with Dirichlet boundary conditions in bounded and unbounded domains; J. Saal \cite{Saal} for the Stokes problem with homogeneous Robin boundary conditions in the half space $\mathbb{R}^{3}_{+}$; R. Shimada \cite{Shimada} for the Stokes problem with non-homogeneous Robin boundary conditions. The maximal regularity  for general parabolic problems is treated in detail  in the long report  \cite{KW} by P. C. Kunstmann and L. Weisand.

In the next Section we introduce some  notations and recall several results, already known, that are needed thereafter. Our main results are stated, and their proofs given in Section 3. 

\section{Stokes operator}
\label{section2}
In order to obtain  strong, weak and very weak solutions to our problem (\ref{StokesProblem})-(\ref{u0}),  we introduced in \cite{ARMA}  three different extensions $A_{p}$, $B_p$, $C_p$,  of the Stokes operators with boundary conditions (\ref{Navierbc}), defined in different spaces of distributions with different regularity properties. Throughout this paper, if not stated otherwise, $p$ will be a real number such that $1<p<\infty$.

We first consider $A_p$,  the Stokes operator with the boundary conditions (\ref{Navierbc}) on  the space $\textbf{L}^{p}_{\sigma,\tau}(\Omega)$ given by 
\begin{equation}\label{Lpsigmatau}
\textbf{L}^{p}_{\sigma,\tau}(\Omega)=\Big\{\textbf{f}\in\textbf{L}^{p}(\Omega);\,\,\,\mathrm{div}\textbf{f}=0\,\,\mathrm{in}\,\,\Omega,\,\, \textbf{f}\cdot\textbf{n}=0\,\,\mathrm{on}\,\,\Gamma\Big\}.
\end{equation} 
By \cite[Corollary 3.7]{ARMA}, this is a well defined subspace of 
\begin{equation}
\label{Hpdiv}
\textbf{H}^{p}(\textrm{div},\Omega)\,=\,\big\{\textbf{v}\in\textbf{L}^{p}(\Omega);\,\,\,\textrm{div}\,\textbf{v}\in\textbf{L}^{p}(\Omega)\big\},
\end{equation}
equipped with the graph norm. 
As described in \cite[Section 3]{conm}, $A_p$  is a closed linear densely defined operator on $\textbf{L}^{p}_{\sigma,\tau}(\Omega)$ defined as follows
$$D(A_{p})= \Big\{ \textbf{u} \in
\textbf{W}^{2,p}(\Omega); \mathrm{div}\,\textbf{u}=
0 \,\mathrm{in}\,\,\Omega,
\textbf{u}\cdot\textbf{n}=0, \mathbf{curl}\,\textbf{u}\times\textbf{n}=\textbf{0}
\,\,\mathrm{on}\,\,\Gamma \Big\}
$$
\begin{equation}\label{defistokesoperator}
\forall\,\textbf{u}\in D(A_{p}),\quad A_{p}\textbf{u}=-P\Delta\textbf{u}\quad\mathrm{in}\,\,\Omega.
\end{equation}
The operator $P$ in (\ref{defistokesoperator}), is the Helmholtz projection defined as follows:
\begin{equation}\label{helmholtzproj}
P:\textbf{L}^{p}(\Omega)\longmapsto\textbf{L}^{p}_{\sigma,\tau}(\Omega);\,\,\,\,\,\,\forall\,\textbf{f}\in\textbf{L}^{p}(\Omega):\quad P\,\textbf{f}\,=\,\textbf{f}\,-\,\mathrm{\textbf{grad}}\,\pi,
\end{equation}
where $\pi\in W^{1,p}(\Omega)/\mathbb{R}$ is the unique solution of the following weak Neuman Problem (cf. \cite{Si}):
\begin{equation}\label{wn.1}
\mathrm{div}\,(\mathrm{\textbf{grad}}\,\pi\,-\,\textbf{f})=0\,\,\,
\mathrm{in}\,\,\Omega,\,\,\,\,\,
(\mathrm{\textbf{grad}}\,\pi\,-\,\textbf{f})\cdot\textbf{n}=0,\,\,\,\mathrm{on}\,\,\Gamma.
\end{equation} 

It is known that, due to the slipping frictionless boundary condition (\ref{Navierbc}), the pressure gradient disappears  in the Stokes operator (cf. \cite[Proposition 3.1]{conm}). As a result the Stokes problem with the boundary condition (\ref{Navierbc}) is reduced to the study of a vectorial Laplace like problem under a free-divergence condition and the boundary conditions (\ref{Navierbc}). 
$$\forall\,\,\textbf{u}\in D (A_{p}),\qquad
A_{p}\textbf{u}\,=\,-\Delta\textbf{u}\quad\mathrm{in}\,\,\Omega.$$

 We also recall that the operator $-A_{p}$ is sectorial and generates a bounded analytic semi-group on $\textbf{L}^{p}_{\sigma,\tau}(\Omega),\,$ for all $1<p<\infty$ (cf. \cite[Theorem 4.12]{conm}). We denote by $e^{-t A_{p}}$ the analytic semi-group associated to the operator $A_{p}$ in $\textbf{L}^{p}_{\sigma,\tau}(\Omega)$.

\medskip

When $\Omega $ is not simply-connected, the Stokes  operator with boundary condition (\ref{Navierbc}) has a non trivial kernel included in all the $L^p$ spaces for $p\in (1, \infty)$. 
It may be characterized as follows (see \cite{Cherif-Nour-M3AS})
\begin{equation}\label{noinailb}
 \textbf{K}_{\tau}(\Omega)\,=\,\big\{\textbf{v}\in\textbf{L}^{p}_{\sigma,\tau}(\Omega);\,\,\mathrm{div}\,\textbf{v}=0,\,\,\mathrm{\textbf{curl}}\,\textbf{v}=\textbf{0}\,\,\textrm{in}\,\,\Omega\big\}.
 \end{equation}
The restriction of the Stokes operator $A_p$ to the subspace
\begin{equation}\label{Xp}
\textbf{X}_{p}\,=\,\big\{\textbf{f}\in\textbf{L}^{p}_{\sigma,\tau}(\Omega);\,\,\,\int_{\Omega}\textbf{f}\cdot\overline{\textbf{v}}\,\textrm{d}\,x=0,\,\,\forall\,\,\textbf{v}\in\textbf{K}_{\tau}(\Omega)\big\},
 \end{equation}
 gives a sectorial operator which is invertible, with bounded inverse. 
 Notice that 
\begin{equation}
\label{S2E10}
\textbf{L}^{p}_{\sigma,\tau}(\Omega)=\textbf{K}_{\tau}(\Omega)\oplus\textbf{X}_{p}.
\end{equation}
We consider now the extension of $A_p$ to the following subspace of $[\textbf{H}^{p}_{0}(\textrm{div},\Omega)]'$ (the dual space of $\textbf{H}^{p}_{0}(\textrm{div},\Omega)$):
\begin{equation}
[\textbf{H}^{p'}_{0}(\mathrm{div},\Omega)]'_{\sigma, \tau}=\{\textbf{f}\in[\textbf{H}^{p'}_{0}(\mathrm{div},\Omega)]';\,\mathrm{div}\,\textbf{f}=0\,\mathrm{in}\,\Omega,\, \textbf{f}\cdot \textbf{n}=0\,\, \mathrm{on}\,\,\Gamma\}.
\end{equation}
By \cite[Corollary 3.7]{ARMA}, that space is well defined, and the extended operator, denoted $B_p$, is a closed linear densely defined operator  such as:
$$D(B_{p})\subset[\textbf{H}^{p'}_{0}(\mathrm{div},\Omega)]'_{\sigma, \tau}\longmapsto[\textbf{H}^{p'}_{0}(\mathrm{div},\Omega)]'_{\sigma, \tau},$$ 
$$
D(B_{p})=\big\{
\textbf{u}\in\textbf{W}^{1,p}(\Omega);\mathrm{div}\,\textbf{u}=0\,\,\textrm{in}\,\Omega,
\textbf{u}\cdot\textbf{n}\,=\,0,\mathbf{curl}\,\textbf{u}\times\textbf{n}=\textbf{0}\,\,\textrm{on}\,\,\Gamma\big\}
$$
and 
\begin{equation}\label{operatorB}
\forall\,\,\textbf{u}\in D (B_{p}),\quad
B_{p}\textbf{u}\,=\,-\Delta\textbf{u}\quad\mathrm{in}\,\,\Omega.
\end{equation}
By \cite[Corollary 4.2]{Amrouche1}, the domain $D(B_p)$ is well defined and, by  \cite[Theorem 4.15]{ARMA}  the operator $-B_{p}$ generates a bounded analytic semi-group on $\,[\textbf{H}^{p'}_{0}(\mathrm{div},\Omega)]'_{\sigma, \tau},\,$ for all $\,1<p<\infty$, whose restriction to 
 \begin{equation}\label{Yp}
\textbf{Y}_{p}\,=\,\left\lbrace\textbf{f}\in[\textbf{H}^{p'}_{0}(\mathrm{div},\Omega)]'_{\sigma,\tau};\,\,\forall\,\textbf{v}\in\textbf{K}_{\tau}(\Omega),\,\,\,\langle\textbf{f},\,\textbf{v}\rangle_{\Omega}\,=\,0  \right\rbrace, 
\end{equation} 
where $\langle .\,,\,.\rangle_{\Omega}=\langle .\,,\,.\rangle_{[\textbf{H}^{p'}_{0}(\mathrm{div},\Omega)]'\times\textbf{H}^{p'}_{0}(\mathrm{div},\Omega)}$, is a sectorial operator, invertible with bounded inverse. Notice also that:
\begin{equation}
\label{S2E11}
[\textbf{H}^{p'}_{0}(\mathrm{div},\Omega)]'_{\sigma,\tau}=\textbf{K}_{\tau}(\Omega)\oplus\textbf{Y}_{p}.
\end{equation}
 
In order to introduce our third  operator we first need the following space: 
\begin{equation}\label{tp}
\textbf{T}^{p}(\Omega)=\big\{\textbf{v}\in\textbf{H}^{p}_{0}(\mathrm{div},\Omega);\,\,\,\mathrm{div}\,\textbf{v}\in
{W}^{1,p}_{0}(\Omega)\big\}
\end{equation}
and  consider the following subspace
$$
[\textbf{T}^{p'}(\Omega)]'_{\sigma, \tau}=\{\textbf{f}\in(\textbf{T}^{p'}(\Omega))';\,\,\mathrm{div}\,\textbf{f}=0\quad \mathrm{in}\,\, \Omega \quad \mathrm{and}\quad \textbf{f}\cdot \textbf{n}=0\quad \mathrm{on}\,\,\Gamma\},
$$
that is well defined by \cite[Corollary 3.12]{ARMA}.

The Stokes operator $A_p$ can be extended to the space $[\textbf{T}^{p'}(\Omega)]'_{\sigma, \tau}$ (cf. \cite[Section 3.2.3]{ARMA}). This extension is a densely defined closed linear operator, denoted  $C_p$:
$$D(C_{p})\subset [\textbf{T}^{p'}(\Omega)]'_{\sigma, \tau}\longmapsto[\textbf{T}^{p'}(\Omega)]'_{\sigma, \tau},\quad\mathrm{where}$$ 
$$
D(C_{p})=\big\{
\textbf{u}\in\textbf{L}^{p}(\Omega);\,\mathrm{div}\,\textbf{u}=0\,\,\textrm{in}\,\,\Omega,\,\,
\textbf{u}\cdot\textbf{n}\,=\,0,\,\,\mathbf{curl}\,\textbf{u}\times\textbf{n}=\textbf{0}\,\,\textrm{on}\,\,\Gamma\big\}
$$
 and for all $\,\textbf{u}\in D (C_{p}),\,\,
C_{p}\textbf{u}\,=\,-\Delta\textbf{u}$ in $\Omega.$ The domain $D(C_p)$ is well defined by  \cite[Lemma 4.14]{Amrouche1}. The operator $-C_{p}$ generates a bounded analytic semi-group on $[\textbf{T}^{p'}(\Omega)]'_{\sigma, \tau}$ for all $1<p<\infty$ (see \cite[Theorem 4.18]{ARMA}). If we define now
\begin{equation}\label{Zp}
\textbf{Z}_{p}\,=\,\left\lbrace\textbf{f}\in[\textbf{T}^{p'}(\Omega)]'_{\sigma,\tau};\,\,\forall\,\textbf{v}\in\textbf{K}_{\tau}(\Omega),\,\,\,\langle\textbf{f},\,\textbf{v}\rangle_{\Omega}\,=\,0  \right\rbrace, 
\end{equation} 
where $\langle .\,,\,.\rangle_{\Omega}=\langle .\,,\,.\rangle_{[\textbf{T}^{p'}(\Omega)]'\times\textbf{T}^{p'}(\Omega)}$, then
\begin{equation}
\label{S2E14}[\textbf{T}^{p'}(\Omega)]'_{\sigma,\tau}=\textbf{K}_{\tau}(\Omega)\oplus\textbf{Z}_{p}
\end{equation}
and the restriction of the Stokes
operator to the space $\textbf{Z}_{p}$,  gives a sectorial operator, invertible with bounded inverse. 

\section{Maximal Regularity: our main results.}\label{Applications}

We consider  in this Section the  problem (\ref{StokesProblem})--(\ref{u0}) under different conditions of the external force $f$.  In our first result we assume $\textbf{f}\in
 L^{q}(0,T;\,\textbf{L}^{p}_{\sigma,\tau}(\Omega))$ and $1<p,q<\infty$. 
 
 \begin{theo} \label{existinhensp} 
 Let $1<p,q<\infty$ and $0<T\leq\infty$. Then for every $\textbf{f}\in L^{q}(0,T;\,\textbf{L}^{p}_{\sigma,\tau}(\Omega))$ there exists a unique solution $\textbf{u}$ of  (\ref{StokesProblem})--(\ref{u0}) satisfying
\begin{equation}\label{reglplqlap1}
\textbf{u}\in L^{q}(0,T_{0};\textbf{W}^{2,p}(\Omega)), T_{0}\leq T, \textrm{if}\,\,T<\infty\,\,\mathrm{and }\,\,T_{0}<T,\textrm{if}\,\,\,T=\infty
\end{equation}
\begin{equation}\label{reglplqlap2}
\frac{\partial\textbf{u}}{\partial t}\in L^{q}(0,T;\,\textbf{L}^{p}_{\sigma,\tau}(\Omega))
\end{equation}
and
\begin{equation}\label{estlplqlap}
\int_{0}^{T}\Big\Vert\frac{\partial\textbf{u}}{\partial t}\Big\Vert^{q}_{\textbf{L}^{p}(\Omega)}dt+\int_{0}^{T}\Vert\Delta\textbf{u}(t)\Vert^{q}_{\textbf{L}^{p}(\Omega)}dt\leq C(p,q,\Omega)\int_{0}^{T}\Vert\textbf{f}(t)\Vert^{q}_{\textbf{L}^{p}(\Omega)}dt.
\end{equation}
\end{theo}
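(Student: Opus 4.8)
The plan is to reduce (\ref{StokesProblem})--(\ref{u0}) to the abstract Cauchy problem (\ref{abstcauchprob}) for the Stokes operator $A_p$ and then to split the data along the decomposition (\ref{S2E10}).

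First I would check that, since $\textbf{f}(t)\in\textbf{L}^{p}_{\sigma,\tau}(\Omega)$ and the pressure gradient disappears from the Stokes operator (cf. \cite[Proposition 3.1]{conm}), any solution $(\textbf{u},\pi)$ of (\ref{StokesProblem})--(\ref{u0}) has $\nabla\pi=\textbf{0}$: applying the Helmholtz projection $P$ to the momentum equation and using $P\,\partial_{t}\textbf{u}=\partial_{t}\textbf{u}$ (because $\textbf{u}(t)\in\textbf{L}^{p}_{\sigma,\tau}(\Omega)$), $P\Delta\textbf{u}=\Delta\textbf{u}$ for $\textbf{u}(t)\in D(A_p)$, and $P\textbf{f}=\textbf{f}$, one is left with $\partial_{t}\textbf{u}-\Delta\textbf{u}=\textbf{f}$, hence $\nabla\pi=\textbf{0}$. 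Thus (\ref{StokesProblem})--(\ref{u0}) is equivalent to $\partial_{t}\textbf{u}+A_p\textbf{u}=\textbf{f}$, $\textbf{u}(0)=\textbf{0}$ in $\textbf{L}^{p}_{\sigma,\tau}(\Omega)$, and it suffices to solve this last problem with the regularity claimed in (\ref{reglplqlap1})--(\ref{reglplqlap2}).

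Next, using the topological direct sum $\textbf{L}^{p}_{\sigma,\tau}(\Omega)=\textbf{K}_{\tau}(\Omega)\oplus\textbf{X}_{p}$ and the associated bounded projections, I would write $\textbf{f}(t)=\textbf{f}_{0}(t)+\textbf{f}_{1}(t)$ with $\textbf{f}_{0}(t)\in\textbf{K}_{\tau}(\Omega)$, $\textbf{f}_{1}(t)\in\textbf{X}_{p}$, and look for $\textbf{u}=\textbf{u}_{0}+\textbf{u}_{1}$ accordingly. Since $A_p\textbf{v}=-\Delta\textbf{v}=\mathbf{curl}\,\mathbf{curl}\,\textbf{v}-\nabla\,\mathrm{div}\,\textbf{v}=\textbf{0}$ for every $\textbf{v}\in\textbf{K}_{\tau}(\Omega)$, the kernel component must solve $\partial_{t}\textbf{u}_{0}=\textbf{f}_{0}$, $\textbf{u}_{0}(0)=\textbf{0}$, that is $\textbf{u}_{0}(t)=\int_{0}^{t}\textbf{f}_{0}(s)\,\mathrm{d}s$. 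Because $\textbf{K}_{\tau}(\Omega)$ is finite-dimensional all norms on it are equivalent and $\Delta\textbf{u}_{0}\equiv\textbf{0}$, so $\partial_{t}\textbf{u}_{0}=\textbf{f}_{0}\in L^{q}(0,T;\textbf{L}^{p}_{\sigma,\tau}(\Omega))$, while H\"older's inequality gives $\|\textbf{u}_{0}\|_{L^{\infty}(0,T_{0};\textbf{W}^{2,p}(\Omega))}\leq C\,T_{0}^{1/q'}\|\textbf{f}_{0}\|_{L^{q}(0,T_{0};\textbf{L}^{p}(\Omega))}$; this quantity being finite only for $T_{0}<\infty$ is exactly what forces the restriction on $T_{0}$ in (\ref{reglplqlap1}).

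For the orthogonal component I would invoke the maximal $L^{p}$--$L^{q}$ regularity established in \cite{ARMA}: the restriction of $A_p$ to $\textbf{X}_{p}$ is sectorial, boundedly invertible, and enjoys maximal $L^{q}$-regularity on $(0,\infty)$, so there is a unique $\textbf{u}_{1}$ with $\partial_{t}\textbf{u}_{1}\in L^{q}(0,T;\textbf{X}_{p})$, $\textbf{u}_{1}\in L^{q}(0,T;D(A_p)\cap\textbf{X}_{p})\hookrightarrow L^{q}(0,T;\textbf{W}^{2,p}(\Omega))$ and $\|\partial_{t}\textbf{u}_{1}\|_{L^{q}(0,T;\textbf{L}^{p})}+\|\Delta\textbf{u}_{1}\|_{L^{q}(0,T;\textbf{L}^{p})}\leq C\|\textbf{f}_{1}\|_{L^{q}(0,T;\textbf{L}^{p})}$ with $C$ independent of $T$. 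Then $\textbf{u}=\textbf{u}_{0}+\textbf{u}_{1}$ has all the required properties, and (\ref{estlplqlap}) follows from this estimate together with $\Delta\textbf{u}_{0}\equiv\textbf{0}$, $\partial_{t}\textbf{u}_{0}=\textbf{f}_{0}$, and the boundedness of the projections. Uniqueness comes from the same splitting applied to the difference of two solutions: the kernel part is constant in $t$ and vanishes at $t=0$, hence is zero, and the $\textbf{X}_{p}$ part is zero because $-A_p|_{\textbf{X}_{p}}$ generates a $C_{0}$-semigroup. The only genuinely delicate point is the behaviour as $T\to\infty$: the semigroup $e^{-tA_p}$ is merely bounded analytic and reduces to the identity on $\textbf{K}_{\tau}(\Omega)$, so $\textbf{u}_{0}$ may grow linearly in $t$ and one cannot hope for global-in-time $\textbf{W}^{2,p}$-integrability, only the local statement (\ref{reglplqlap1}); every other bound is uniform in $T$ thanks to the invertibility of $A_p$ on $\textbf{X}_{p}$.
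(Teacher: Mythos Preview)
Your proposal is correct and follows the same overall strategy as the paper: split $\textbf{f}$ along $\textbf{L}^{p}_{\sigma,\tau}(\Omega)=\textbf{K}_{\tau}(\Omega)\oplus\textbf{X}_{p}$, invoke the maximal regularity result of \cite{ARMA} on $\textbf{X}_{p}$, and treat the kernel piece separately. The only substantive difference is how the kernel piece is handled. You construct $\textbf{u}_{0}(t)=\int_{0}^{t}\textbf{f}_{0}(s)\,\mathrm{d}s$ directly, observe that it stays in $\textbf{K}_{\tau}(\Omega)$, and use the finite dimensionality of $\textbf{K}_{\tau}(\Omega)$ (equivalently, the elliptic regularity of harmonic vector fields) to upgrade to $\textbf{W}^{2,p}$. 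The paper instead starts from the mild solution $\textbf{u}_{2}$ produced by the semigroup, applies $\mathbf{curl}$, and appeals to the uniqueness theorem \cite[Theorem~4.1]{JEE} for the resulting homogeneous problem to conclude $\mathbf{curl}\,\textbf{u}_{2}=\textbf{0}$, hence $\textbf{u}_{2}(t)\in\textbf{K}_{\tau}(\Omega)$ and $\partial_{t}\textbf{u}_{2}=\textbf{f}_{2}$. Your route is more elementary and self-contained (no external PDE uniqueness result is needed), and it makes transparent why $T_{0}<\infty$ is forced in (\ref{reglplqlap1}); the paper's route has the advantage of not explicitly invoking the finite dimensionality of $\textbf{K}_{\tau}(\Omega)$, though both arguments ultimately rely on $\textbf{K}_{\tau}(\Omega)\subset D(A_{p})$. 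Your extra paragraph showing $\nabla\pi=\textbf{0}$ is a useful clarification that the paper leaves implicit when passing from (\ref{StokesProblem}) to the pressure-free systems (\ref{inhenspf1})--(\ref{inhenspf2}).
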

\begin{proof}
Since the operator $-A_{p}$ generates a bounded analytic semi-group in $\textbf{L}^{p}_{\sigma,\tau}(\Omega)$,  and $\textbf{f}\in L^{q}(0,T;\,\textbf{L}^{p}_{\sigma,\tau}(\Omega))$, problem (\ref{StokesProblem})--(\ref{u0})  has a unique solution $\textbf{u}\in C(0,T;\,\textbf{L}^{p}_{\sigma,\tau}(\Omega))$. To prove the maximal $L^p$-$L^q$ regularity (\ref{reglplqlap1})-(\ref{estlplqlap}) we proceed as follows.

By (\ref{S2E10}) we may write $\textbf{f}$ in the form, $\textbf{f}=\textbf{f}_1+\textbf{f}_2$ where $\textbf{f}_{1}\in L^{q}(0,T;\,\textbf{X}_{p})$ and $\textbf{f}_{2}\in L^{q}(0,T;\,\textbf{K}_{\tau}(\Omega))$. Thus the solution $\textbf{u}$ to (\ref{StokesProblem})--(\ref{u0}) is such that $\textbf{u}=\textbf{u}_1+\textbf{u}_2$, where $\textbf{u}_1$ and $\textbf{u}_2$ satisfy

\begin{equation}
 \label{inhenspf1}
 \left\{
\begin{array}{cccc}
\frac{\partial\textbf{u}_{1}}{\partial t} - \Delta \textbf{u
}_{1}=\textbf{f}_{1},& 
\mathrm{div}\,\textbf{u}_{1}= 0 &\mathrm{in}&\Omega\times (0,T), \\
\textbf{u}_{1}\cdot\textbf{n}=0,& 
\mathrm{\textbf{curl}}\,\textbf{u}_{1}\times \textbf{n} = \textbf{0} &\mathrm{on} & \Gamma\times (0,T), \\
&\textbf{u}_{1}(0)=\textbf{0} &\mathrm{in}&
\Omega
\end{array}
\right.
\end{equation}
and

\begin{equation}
 \label{inhenspf2}
 \left\{
\begin{array}{cccc}
\frac{\partial\textbf{u}_{2}}{\partial t} - \Delta \textbf{u
}_{2}=\textbf{f}_{2},& 
\mathrm{div}\,\textbf{u}_{2}= 0 &\mathrm{in}&\Omega\times (0,T), \\
\textbf{u}_{2}\cdot\textbf{n}=0,& 
\mathrm{\textbf{curl}}\,\textbf{u}_{2}\times \textbf{n} = \textbf{0} &\mathrm{on} & \Gamma\times (0,T), \\
&\textbf{u}_{2}(0)=\textbf{0} &\mathrm{in}&
\Omega
\end{array}
\right.
\end{equation}
respectively.

By \cite[Theorem 1.2]{ARMA} we know that $\textbf{u}_{1}$ satisfies
\begin{equation}\label{reglplqlapu1}
\textbf{u}_{1}\in L^{q}(0,T_{0};\,D(A_{p}))\cap W^{1,q}(0,T;\,\textbf{L}^{p}_{\sigma,\tau}(\Omega))
\end{equation}
\begin{equation}\label{estlplqlapu1}
\int_{0}^{T}\!\!\Big\Vert\frac{\partial\textbf{u}_{1}}{\partial t}\Big\Vert^{q}_{\textbf{L}^{p}(\Omega)}\!dt+\int_{0}^{T}\!\!\Vert\Delta\textbf{u}_{1}(t)\Vert^{q}_{\textbf{L}^{p}(\Omega)}\!dt\leq C(p,q,\Omega)\int_{0}^{T}\!\!\Vert\textbf{f}_{1}(t)\Vert^{q}_{\textbf{L}^{p}(\Omega)}dt.
\end{equation}

Set $\textbf{z}_{2}=\mathbf{curl}\,\textbf{u}_{2}$. Then $\textbf{z}_{2}$ is a solution of the problem
\begin{equation}
\left\{
\begin{array}{cccc}
\frac{\partial\textbf{z}_{2}}{\partial t} - \Delta \textbf{z}_{2}= \textbf{0},& \mathrm{div}\,\textbf{z}_{2}=0& \textrm{in}&
\Omega\times (0,T), \\
&\textbf{z}_{2}\times \textbf{n} = \textbf{0},& \textrm{on} & \Gamma\times (0,T), \\
&\textbf{z}_{2}(0)= \textbf{0} &\textrm{in} &\Omega.
\end{array}
\right.
\end{equation}
Thus, using \cite[Theorem 4.1]{JEE} we deduce that $\mathbf{curl}\,\textbf{u}_{2}=\textbf{z}_{2}=\textbf{0}$ in $\Omega$. This means that $\textbf{u}_{2}\in\textbf{K}_{\tau}(\Omega)$ and then 
\begin{equation}
\forall t\geq0,\quad\frac{\partial\textbf{u}_{2}(t)}{\partial t}=\textbf{f}_{2}(t)\quad\mathrm{in}\,\,\Omega.
\end{equation}
As a result $\textbf{u}_{2}$ satisfies
\begin{equation}\label{reglplqlapu2}
\textbf{u}_{2}\in L^{q}(0,T_{0};\,D(A_{p}))\cap W^{1,q}(0,T;\,\textbf{L}^{p}_{\sigma,\tau}(\Omega))
\end{equation}
and
\begin{equation}\label{estlplqlapu2}
\int_{0}^{T}\Big\Vert\frac{\partial\textbf{u}_{2}}{\partial t}\Big\Vert^{q}_{\textbf{L}^{p}(\Omega)}dt=\,\int_{0}^{T}\Vert\textbf{f}_{2}(t)\Vert^{q}_{\textbf{L}^{p}(\Omega)}dt \leq C(p,q,\Omega)\int_{0}^{T}\Vert\textbf{f}(t)\Vert^{q}_{\textbf{L}^{p}(\Omega)}dt.
\end{equation}
Thus putting together (\ref{reglplqlapu1})-(\ref{estlplqlapu1}) and (\ref{reglplqlapu2})-(\ref{estlplqlapu2}) we deduce   our result.
\end{proof}

We now extend  the previous result  to the more general case where  the external force $\textbf{f}\in L^{q}(0,T;\,\textbf{L}^{p}(\Omega))$ is not necessarily divergence free. It is used that the pressure can be decoupled, using the weak Neumann Problem (\ref{wn.1}). 

\begin{theo}[Strong Solutions for the inhomogeneous Stokes Problem]\label{Exisinhnsplp}
Let $T\in (0, \infty]$, $1<p,q<\infty$, 
$\textbf{f}\in\textbf{L}^{q}(0,T;\textbf{L}^{p}(\Omega))$ and $\textbf{u}_{0}=0$. The Problem (\ref{StokesProblem})-(\ref{u0}) has a unique solution $(\textbf{u},\pi)$ such that
\begin{equation}\label{reglplqstokes1}
\textbf{u}\in L^{q}(0,T_{0};\,\textbf{W}^{2,p}(\Omega)),\,\,\,\,  T_{0}\leq T\,\,\,\textrm{if}\,\,\,T<\infty\,\,\,\, \mathrm{and }\,\,\,\,T_{0}<T\,\,\,\textrm{if}\,\,\,T=\infty,
\end{equation}
\begin{equation}\label{reglplqstokes2}
\pi\in L^{q}(0,T;\,W^{1,p}(\Omega)/\mathbb{R}),\qquad \frac{\partial\textbf{u}}{\partial t}\in L^{q}(0,T;\,\textbf{L}^{p}(\Omega))
\end{equation}
and
\begin{eqnarray}\label{estlplqstokes}
\int_{0}^{T}\Big\Vert\frac{\partial\textbf{u}}{\partial t}\Big\Vert^{q}_{\textbf{L}^{p}(\Omega)}dt+\,\int_{0}^{T}\Vert\Delta\textbf{u}(t)\Vert^{q}_{\textbf{L}^{p}(\Omega)}dt+\int_{0}^{T}\Vert\pi(t)\Vert^{q}_{W^{1,p}(\Omega)/\mathbb{R}}dt\\
\leq\,C(p,q,\Omega)\,\int_{0}^{T}\Vert\textbf{f}(t)\Vert^{q}_{\textbf{L}^{p}(\Omega)}dt.
\end{eqnarray}
\end{theo}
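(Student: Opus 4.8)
The plan is to reduce Theorem \ref{Exisinhnsplp} to Theorem \ref{existinhensp} by decoupling the pressure via the Helmholtz projection $P$ defined in (\ref{helmholtzproj}). Given $\textbf{f}\in\textbf{L}^{q}(0,T;\textbf{L}^{p}(\Omega))$, for a.e.\ $t$ write $\textbf{f}(t)=P\textbf{f}(t)+\mathrm{\textbf{grad}}\,\varphi(t)$, where $\varphi(t)\in W^{1,p}(\Omega)/\mathbb{R}$ is the unique solution of the weak Neumann problem (\ref{wn.1}) with datum $\textbf{f}(t)$. First I would record that $P$ is a bounded projection on $\textbf{L}^{p}(\Omega)$ with range $\textbf{L}^{p}_{\sigma,\tau}(\Omega)$, so that $t\mapsto P\textbf{f}(t)$ lies in $L^{q}(0,T;\textbf{L}^{p}_{\sigma,\tau}(\Omega))$ and $t\mapsto\mathrm{\textbf{grad}}\,\varphi(t)$ lies in $L^{q}(0,T;\textbf{L}^{p}(\Omega))$, with the norm bounds
\begin{equation}\label{helmbound}
\|P\textbf{f}(t)\|_{\textbf{L}^{p}(\Omega)}+\|\mathrm{\textbf{grad}}\,\varphi(t)\|_{\textbf{L}^{p}(\Omega)}\leq C(p,\Omega)\|\textbf{f}(t)\|_{\textbf{L}^{p}(\Omega)}
\end{equation}
for a.e.\ $t$; raising to the power $q$ and integrating gives the corresponding bound in $L^{q}(0,T;\textbf{L}^{p}(\Omega))$. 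Measurability of $t\mapsto P\textbf{f}(t)$ follows from linearity and boundedness of $P$ together with density of simple functions.

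Next I would apply Theorem \ref{existinhensp} with right-hand side $P\textbf{f}\in L^{q}(0,T;\textbf{L}^{p}_{\sigma,\tau}(\Omega))$: this yields a unique $\textbf{u}$ with $\textbf{u}\in L^{q}(0,T_{0};\textbf{W}^{2,p}(\Omega))$, $\partial_t\textbf{u}\in L^{q}(0,T;\textbf{L}^{p}_{\sigma,\tau}(\Omega))\subset L^{q}(0,T;\textbf{L}^{p}(\Omega))$, $\mathrm{div}\,\textbf{u}=0$, the boundary conditions (\ref{Navierbc}), $\textbf{u}(0)=\textbf{0}$, and
\begin{equation*}
\frac{\partial\textbf{u}}{\partial t}-\Delta\textbf{u}=P\textbf{f}\qquad\text{in }\Omega\times(0,T),
\end{equation*}
together with estimate (\ref{estlplqlap}) for $\textbf{u}$ in terms of $\|P\textbf{f}\|$, hence in terms of $\|\textbf{f}\|$ by (\ref{helmbound}). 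Now set $\pi(t):=\varphi(t)$; then $\pi\in L^{q}(0,T;W^{1,p}(\Omega)/\mathbb{R})$ with the desired bound, and since $P\textbf{f}=\textbf{f}-\mathrm{\textbf{grad}}\,\varphi=\textbf{f}-\nabla\pi$, the pair $(\textbf{u},\pi)$ satisfies $\partial_t\textbf{u}-\Delta\textbf{u}+\nabla\pi=\textbf{f}$, i.e.\ it solves (\ref{StokesProblem})--(\ref{u0}). Adding the three estimates for $\partial_t\textbf{u}$, $\Delta\textbf{u}$ and $\pi$ produces (\ref{estlplqstokes}).

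For uniqueness, suppose $(\textbf{u},\pi)$ and $(\widetilde{\textbf{u}},\widetilde\pi)$ are two solutions in the stated regularity class; the difference solves the homogeneous problem. Applying $P$ to the equation $\partial_t(\textbf{u}-\widetilde{\textbf{u}})-\Delta(\textbf{u}-\widetilde{\textbf{u}})+\nabla(\pi-\widetilde\pi)=\textbf{0}$ kills the gradient term (since $P\nabla(\pi-\widetilde\pi)=0$ and $P$ commutes with $\partial_t$ and with $-\Delta$ on $D(A_p)$ by (\ref{defistokesoperator})), so $\textbf{u}-\widetilde{\textbf{u}}$ solves the homogeneous Cauchy problem for $A_p$ and hence vanishes by the uniqueness part of Theorem \ref{existinhensp}; then $\nabla(\pi-\widetilde\pi)=\textbf{0}$, so $\pi-\widetilde\pi$ is constant in $x$, i.e.\ zero in $W^{1,p}(\Omega)/\mathbb{R}$. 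The main obstacle — really the only nonroutine point — is ensuring that the Helmholtz decomposition is carried out with estimates uniform in $t$ and with the correct measurability in $t$, i.e.\ justifying (\ref{helmbound}) together with $P\textbf{f}\in L^{q}(0,T;\textbf{L}^{p}_{\sigma,\tau}(\Omega))$; this rests on the well-posedness and $L^p$-estimate for the weak Neumann problem (\ref{wn.1}), which is classical (cf.\ \cite{Si}), so the argument is straightforward once that is invoked.
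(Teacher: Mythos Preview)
Your proof is correct and follows essentially the same route as the paper: decouple the pressure via the weak Neumann problem (\ref{wn.1}) (equivalently, the Helmholtz projection $P$), obtain $\pi\in L^{q}(0,T;W^{1,p}(\Omega)/\mathbb{R})$ with the right bound, and then apply Theorem~\ref{existinhensp} to $\textbf{f}-\nabla\pi=P\textbf{f}\in L^{q}(0,T;\textbf{L}^{p}_{\sigma,\tau}(\Omega))$. Your write-up is slightly more detailed (measurability in $t$, explicit uniqueness argument), but the strategy and the key ingredients are identical.
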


\begin{proof} 
As we saw in  Section \ref{section2} when defining the Helmholtz projection $P$,  for every
$\textbf{f}\in\textbf{L}^{q}(0,T;\,\textbf{L}^{p}(\Omega))$, and almost every $0<t<T,$ the problem
\begin{equation}
\mathrm{div}(\mathrm{\textbf{grad}}\,\pi(t)-\textbf{f}(t))=0\,\,\textrm{in}\,\Omega,\,\,\,\,
(\mathrm{\textbf{grad}}\,\pi(t)-\textbf{f}(t))\cdot\textbf{n}=0\,\,\textrm{on}\,\Gamma,
\end{equation}
has a unique solution $\pi(t)\in W^{1,p}(\Omega)/\mathbb{R}$ that
satisfies the estimate
\begin{equation}
\textrm{for}\,\,\textrm{a.e.}\,\,t\in(0,T)\qquad\|\pi(t)\|_{W^{1,p}(\Omega)/\mathbb{R}}\,\leq\,C(\Omega)\|\textbf{f}(t)\|_{\textbf{L}^{p}(\Omega)}.
\end{equation}
It follows  that: 
$$\pi\in L^{q}(0,T;\,W^{1,p}(\Omega)/\mathbb{R}),\,\,\hbox{and}\,\,\,(\textbf{f}-\mathrm{\textbf{grad}}\,\pi)\in
L^{q}(0,T;\,\textbf{L}^{p}_{\sigma,\tau}(\Omega)).$$
 As a result, thanks to Theorem \ref{existinhensp},  Problem (\ref{StokesProblem})-(\ref{u0})  has a unique solution $(\textbf{u},\pi)$ satisfying (\ref{reglplqstokes1})-(\ref{estlplqstokes}).
\end{proof}
Similar results  hold for weak and  very weak solutions. 
\begin{theo}[Weak Solutions for the inhomogeneous Stokes Problem]\label{Existinhsphdiv}
Let $1<p,q<\infty$, 
$\textbf{u}_{0}=0$ and let $\textbf{f}\in L^{q}(0,T;\,[\textbf{H}^{p'}_{0}(\mathrm{div},\Omega)]')$, $0<T\leq\infty$. The Problem (\ref{StokesProblem})-(\ref{u0}) has a unique solution $(\textbf{u},\pi)$ satisfying
\begin{equation}\label{weakmaxregl1}
\textbf{u}\in L^{q}(0,T_{0};\,\,\textbf{W}^{1,p}(\Omega)),\,\,\,\,  T_{0}\leq T\,\,\,\textrm{if}\,\,\,T<\infty\,\,\,\, \mathrm{and }\,\,\,\,T_{0}<T\,\,\,\textrm{if}\,\,\,T=\infty,
\end{equation}
\begin{equation}\label{weakmaxregl2}
\pi\in L^{q}(0,T;\,\,L^{p}(\Omega)/\mathbb{R}),\qquad\frac{\partial\textbf{u}}{\partial t}\in L^{q}(0,T;\,\in[\textbf{H}^{p'}_{0}(\mathrm{div}\Omega)]'_{\sigma,\tau})
\end{equation}
and
\begin{eqnarray}\label{weakmaxregl3}
&&\int_{0}^{T}\Big\Vert\frac{\partial\textbf{u}}{\partial t}\Big\Vert^{q}_{[\textbf{H}^{p'}_{0}(\mathrm{div}\Omega)]'}dt+\,\int_{0}^{T}\Vert\Delta\textbf{u}(t)\Vert^{q}_{[\textbf{H}^{p'}_{0}(\mathrm{div}\Omega)]'}dt+\nonumber \\
&&\hskip 0.15cm +\,\int_{0}^{T}\Vert\pi(t)\Vert^{q}_{L^{p}(\Omega)/\mathbb{R}}dt
\leq\,C(p,q,\Omega)\,\int_{0}^{T}\Vert\textbf{f}(t)\Vert^{q}_{[\textbf{H}^{p'}_{0}(\mathrm{div}\Omega)]'}dt.
\end{eqnarray}
 \end{theo}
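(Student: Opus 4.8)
The plan is to mimic exactly the structure of the proof of Theorem~\ref{Exisinhnsplp}, but working now in the larger space $[\textbf{H}^{p'}_{0}(\mathrm{div},\Omega)]'$ rather than in $\textbf{L}^{p}(\Omega)$, and invoking the operator $B_p$ together with its maximal regularity from \cite[Theorem 4.15]{ARMA} in place of $A_p$. First I would recall that for $\textbf{f}\in[\textbf{H}^{p'}_{0}(\mathrm{div},\Omega)]'$, one can still solve a weak Neumann-type problem to extract a pressure: there is a unique $\pi\in L^{p}(\Omega)/\mathbb{R}$ with $\mathrm{\textbf{grad}}\,\pi\in[\textbf{H}^{p'}_{0}(\mathrm{div},\Omega)]'$ and $\textbf{f}-\mathrm{\textbf{grad}}\,\pi\in[\textbf{H}^{p'}_{0}(\mathrm{div},\Omega)]'_{\sigma,\tau}$, together with the estimate $\|\pi(t)\|_{L^{p}(\Omega)/\mathbb{R}}\le C(\Omega)\,\|\textbf{f}(t)\|_{[\textbf{H}^{p'}_{0}(\mathrm{div},\Omega)]'}$ for a.e.\ $t$; integrating the $q$-th power in $t$ gives $\pi\in L^{q}(0,T;L^{p}(\Omega)/\mathbb{R})$ and $\textbf{f}-\mathrm{\textbf{grad}}\,\pi\in L^{q}(0,T;[\textbf{H}^{p'}_{0}(\mathrm{div},\Omega)]'_{\sigma,\tau})$. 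This is the exact analogue of the first paragraph of the proof of Theorem~\ref{Exisinhnsplp}, with $W^{1,p}$ replaced by $L^{p}$ and the reference being the construction of the Helmholtz projection at the level of $[\textbf{H}^{p'}_{0}(\mathrm{div},\Omega)]'$ given in \cite{ARMA}.

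Next, I would prove the analogue of Theorem~\ref{existinhensp} at the level of $B_p$: for $\textbf{g}\in L^{q}(0,T;[\textbf{H}^{p'}_{0}(\mathrm{div},\Omega)]'_{\sigma,\tau})$ the problem $\partial_t\textbf{u}+B_p\textbf{u}=\textbf{g}$, $\textbf{u}(0)=0$, has a unique solution with $\textbf{u}\in L^{q}(0,T_0;D(B_p))\cap W^{1,q}(0,T;[\textbf{H}^{p'}_{0}(\mathrm{div},\Omega)]'_{\sigma,\tau})$ and the corresponding estimate. The argument is the splitting already used in Theorem~\ref{existinhensp}: decompose $\textbf{g}=\textbf{g}_1+\textbf{g}_2$ along $[\textbf{H}^{p'}_{0}(\mathrm{div},\Omega)]'_{\sigma,\tau}=\textbf{K}_{\tau}(\Omega)\oplus\textbf{Y}_{p}$ from (\ref{S2E11}); for the $\textbf{Y}_p$-part apply \cite[Theorem 1.2]{ARMA} (the restriction of $B_p$ to $\textbf{Y}_p$ is invertible, so the maximal regularity of \cite{ARMA} applies verbatim); for the $\textbf{K}_{\tau}(\Omega)$-part, set $\textbf{z}_2=\mathbf{curl}\,\textbf{u}_2$, observe it solves the homogeneous heat system with $\textbf{z}_2\times\textbf{n}=\textbf{0}$ and $\textbf{z}_2(0)=\textbf{0}$, so by \cite[Theorem 4.1]{JEE} $\textbf{z}_2=\textbf{0}$, hence $\textbf{u}_2\in\textbf{K}_{\tau}(\Omega)$ solves $\partial_t\textbf{u}_2=\textbf{g}_2$, $\Delta\textbf{u}_2=0$, and its regularity and estimate are immediate. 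Summing the two pieces yields the claim. Taking $\textbf{g}=\textbf{f}-\mathrm{\textbf{grad}}\,\pi$ and recalling that, by \cite[Proposition 3.1]{conm}, the pressure gradient drops out of the Stokes operator (so $B_p\textbf{u}=-\Delta\textbf{u}$), we recover that $(\textbf{u},\pi)$ solves (\ref{StokesProblem})--(\ref{u0}) in the required sense, and the $W^{1,p}$-regularity (\ref{weakmaxregl1}) of $\textbf{u}$ comes from $D(B_p)\subset\textbf{W}^{1,p}(\Omega)$.

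Finally I would assemble the estimate (\ref{weakmaxregl3}) by adding the bound for $\pi$ from the first step to the maximal regularity estimate for $\textbf{u}$ from the second step, using $\|\textbf{f}-\mathrm{\textbf{grad}}\,\pi\|_{[\textbf{H}^{p'}_{0}(\mathrm{div},\Omega)]'}\le C\|\textbf{f}\|_{[\textbf{H}^{p'}_{0}(\mathrm{div},\Omega)]'}$ and noting $\|\Delta\textbf{u}\|_{[\textbf{H}^{p'}_{0}(\mathrm{div},\Omega)]'}=\|\partial_t\textbf{u}-(\textbf{f}-\mathrm{\textbf{grad}}\,\pi)\|_{[\textbf{H}^{p'}_{0}(\mathrm{div},\Omega)]'}$. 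Uniqueness follows from the uniqueness for the $B_p$-Cauchy problem (analytic semigroup) combined with the uniqueness of the Helmholtz decomposition in $[\textbf{H}^{p'}_{0}(\mathrm{div},\Omega)]'$. The step I expect to be the main obstacle, or at least the one requiring the most care, is verifying that the Helmholtz-type decomposition and the pressure estimate genuinely hold at the level of the dual space $[\textbf{H}^{p'}_{0}(\mathrm{div},\Omega)]'$ with the stated $L^{p}(\Omega)/\mathbb{R}$ bound on $\pi$ — everything else is a transcription of the proofs of Theorems~\ref{existinhensp} and~\ref{Exisinhnsplp} with $A_p$ replaced by $B_p$ and the appropriate function spaces — but this has essentially been established in \cite{ARMA} in the course of constructing $B_p$, so it can be cited rather than reproved.
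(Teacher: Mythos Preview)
Your proposal is correct and follows essentially the same route as the paper: first treat the divergence-free case by splitting along $[\textbf{H}^{p'}_{0}(\mathrm{div},\Omega)]'_{\sigma,\tau}=\textbf{K}_{\tau}(\Omega)\oplus\textbf{Y}_{p}$, invoke the maximal regularity from \cite{ARMA} on the $\textbf{Y}_p$-part and the curl argument for the kernel part, then handle the general case via a Helmholtz-type extraction of the pressure. Two minor citation adjustments: for the maximal regularity of the $\textbf{Y}_p$-part the paper invokes \cite[Proposition~6.4, Remark~7.15]{ARMA} rather than \cite[Theorem~1.2]{ARMA} (the latter is stated for $A_p$), and for the pressure estimate in $[\textbf{H}^{p'}_{0}(\mathrm{div},\Omega)]'$ the paper cites \cite{AMN} rather than \cite{ARMA}; your instinct that this step needs a separate reference was on target.
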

 \begin{proof}
 Suppose first $\textbf{f}\in L^{q}(0,T;\,[\textbf{H}^{p'}_{0}(\mathrm{div},\Omega)]' _{ \sigma , \tau  })$. Using that $-B_p$ generates a bounded analytic semigroup in $[\textbf{H}^{p'}_{0}(\mathrm{div},\Omega)]' _{ \sigma , \tau  }$ we deduce the existence of a unique weak solution 
$ \textbf{u}\in C(0, T; [\textbf{H}^{p'}_{0}(\mathrm{div},\Omega)]' _{ \sigma , \tau  })$ of (\ref{StokesProblem})--(\ref{u0}).  

By (\ref{S2E11}) we may write now $\textbf{f}$ as, $\textbf{f}=\textbf{f}_1+\textbf{f}_2$ where $\textbf{f}_{1}\in L^{q}(0,T;\,\textbf{Y}_{p})$ and $\textbf{f}_{2}\in L^{q}(0,T;\,\textbf{K}_{\tau}(\Omega))$. Proceeding as in the proof of Theorem \ref{existinhensp} we deduce that the solution $\textbf{u}$ to problem (\ref{StokesProblem})--(\ref{u0}) is such that $\textbf{u}=\textbf{u}_1+\textbf{u}_2$, where $\textbf{u}_1$ and $\textbf{u}_2$ are weak solutions of (\ref{inhenspf1}) and (\ref{inhenspf2}) respectively and that $\textbf{u}_2\in\textbf{K}_{\tau}(\Omega)$ for almost all $0<t\leq T$. Using \cite[Proposition 6.4, Remark 7.15]{ARMA} we deduce that the solution $\textbf{u}$ satisfies the maximal regularity (\ref{weakmaxregl1})-(\ref{weakmaxregl3}).
Suppose now  $\textbf{f}\in L^{q}(0,T;\,[\textbf{H}^{p'}_{0}(\mathrm{div},\Omega)]')$. Then, for almost every $t\in (0, T)$, there exists  a unique solution $\pi(t)\in L^{p}(\Omega)/\mathbb{R}$ such that:
 \begin{equation}
 \Vert\pi\Vert_{L^{p}(\Omega)/\mathbb{R}}\,\leq\,C_{2}(\Omega,p)\Vert\textbf{f}\Vert_{[\textbf{H}^{p'}_{0}(\mathrm{div},\Omega)]'},
 \end{equation}
(cf. \cite{AMN}), and then also:
$$\pi\in L^{q}(0,T;\,L^p(\Omega)/\mathbb{R})\,\,\,\,\hbox{and}\,\,\,\,
(\textbf{f}-\mathrm{\textbf{grad}}\,\pi)\in
L^{q}(0,T;\,[\textbf{H}^{p'}_{0}(\mathrm{div},\Omega)]' _{ \sigma , \tau  }).$$
We deduce from the previous step that $(\textbf{u}, \pi )$ satisfies (\ref{weakmaxregl1})-(\ref{weakmaxregl3}). \end{proof}

\begin{theo}[Very weak solutions for the inhomogeneous Stokes Problem]\label{Existinhsptp}\hfill \break
Let $T\in (0, \infty]$, $1<p,q<\infty$,  $\textbf{u}_{0}=0$ and $\textbf{f}\in L^{q}(0,T;\,[\textbf{T}^{p'}(\Omega)]')$. Then the time dependent Stokes Problem (\ref{StokesProblem})-(\ref{u0}) has a unique solution $(\textbf{u},\pi)$ satisfying
\begin{equation}
\label{S3E1}
\textbf{u}\in L^{q}(0,T_{0};\,\textbf{L}^{p}(\Omega)),\,\,\,\,  T_{0}\leq T\,\,\,\textrm{if}\,\,\,T<\infty\,\,\,\, \mathrm{and }\,\,\,\,T_{0}<T\,\,\,\textrm{if}\,\,\,T=\infty,
\end{equation}  
\begin{equation}
\label{S3E2}
\pi\in L^{q}(0,T;\,\,W^{-1,p}(\Omega)/\mathbb{R}),\qquad\frac{\partial\textbf{u}}{\partial t}\in L^{q}(0,T;\,\in[\textbf{T}^{p'}(\Omega)]'_{\sigma,\tau})
\end{equation}
and
\begin{eqnarray}
\label{S3E3}
&&\int_{0}^{T}\Big\Vert\frac{\partial\textbf{u}}{\partial t}\Big\Vert^{q}_{[\textbf{T}^{p'}(\Omega)]'}dt+\,\int_{0}^{T}\Vert\Delta\textbf{u}(t)\Vert^{q}_{[\textbf{T}^{p'}(\Omega)]'}dt+
\nonumber \\
&&\hskip 0.15cm +\int_{0}^{T}\Vert\pi(t)\Vert^{q}_{W^{-1,p}(\Omega)/\mathbb{R}}dt \leq\,C(p,q,\Omega)\,\int_{0}^{T}\Vert\textbf{f}(t)\Vert^{q}_{[\textbf{T}^{p'}(\Omega)]'}dt.
\end{eqnarray}
 \end{theo}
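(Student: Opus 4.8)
The plan is to mirror exactly the argument used for Theorems \ref{existinhensp} and \ref{Existinhsphdiv}, now working in the triple of spaces associated with the operator $C_p$. First I would reduce to the divergence-free case: given $\textbf{f}\in L^{q}(0,T;\,[\textbf{T}^{p'}(\Omega)]')$, for a.e. $t$ solve the appropriate weak Neumann-type problem to obtain a pressure $\pi(t)\in W^{-1,p}(\Omega)/\mathbb{R}$ with $\|\pi(t)\|_{W^{-1,p}(\Omega)/\mathbb{R}}\leq C(\Omega,p)\,\|\textbf{f}(t)\|_{[\textbf{T}^{p'}(\Omega)]'}$ (the analogue of the estimates invoked via \cite{Si} and \cite{AMN} in the earlier proofs), so that $\pi\in L^{q}(0,T;\,W^{-1,p}(\Omega)/\mathbb{R})$ and $(\textbf{f}-\mathrm{\textbf{grad}}\,\pi)\in L^{q}(0,T;\,[\textbf{T}^{p'}(\Omega)]'_{\sigma,\tau})$. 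This decouples the pressure and leaves a problem with divergence-free right-hand side in $[\textbf{T}^{p'}(\Omega)]'_{\sigma,\tau}$.

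Next, with $\textbf{f}\in L^{q}(0,T;\,[\textbf{T}^{p'}(\Omega)]'_{\sigma,\tau})$, I would use that $-C_p$ generates a bounded analytic semigroup on $[\textbf{T}^{p'}(\Omega)]'_{\sigma,\tau}$ (cited \cite[Theorem 4.18]{ARMA}) to get a unique very weak solution $\textbf{u}\in C(0,T;\,[\textbf{T}^{p'}(\Omega)]'_{\sigma,\tau})$. Then I would invoke the splitting (\ref{S2E14}), $[\textbf{T}^{p'}(\Omega)]'_{\sigma,\tau}=\textbf{K}_{\tau}(\Omega)\oplus\textbf{Z}_{p}$, to write $\textbf{f}=\textbf{f}_1+\textbf{f}_2$ with $\textbf{f}_1\in L^{q}(0,T;\,\textbf{Z}_p)$ and $\textbf{f}_2\in L^{q}(0,T;\,\textbf{K}_{\tau}(\Omega))$, hence $\textbf{u}=\textbf{u}_1+\textbf{u}_2$ solving the corresponding split problems (\ref{inhenspf1}), (\ref{inhenspf2}). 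For $\textbf{u}_1$, the maximal regularity in the $\textbf{Z}_p$ scale follows from the already-established result for the invertible restriction of the Stokes operator to $\textbf{Z}_p$, i.e. the very weak version of \cite[Theorem 1.2]{ARMA} (the same citation trail used in Theorems \ref{existinhensp}--\ref{Existinhsphdiv}). For $\textbf{u}_2$, the argument via $\textbf{z}_2=\mathbf{curl}\,\textbf{u}_2$ and \cite[Theorem 4.1]{JEE} shows $\mathbf{curl}\,\textbf{u}_2=\textbf{0}$, so $\textbf{u}_2(t)\in\textbf{K}_{\tau}(\Omega)$ for a.e. $t$, the equation collapses to $\partial_t\textbf{u}_2=\textbf{f}_2$, and the regularity of $\textbf{u}_2$ is immediate with $\int_0^T\|\partial_t\textbf{u}_2\|^q\,dt=\int_0^T\|\textbf{f}_2\|^q\,dt$ controlled by the norm of $\textbf{f}$ (projections on the finite-dimensional kernel $\textbf{K}_{\tau}(\Omega)$ are bounded in every relevant norm, including the $[\textbf{T}^{p'}(\Omega)]'$-norm). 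Adding the two contributions and recombining with $\pi$ yields (\ref{S3E1})--(\ref{S3E3}).

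The main obstacle I anticipate is the pressure step in the very weak setting: one must know that the weak Neumann problem is solvable with $\pi\in W^{-1,p}(\Omega)/\mathbb{R}$ and with the stated estimate against the $[\textbf{T}^{p'}(\Omega)]'$-norm of the data, which is a genuinely weaker functional setting than the $L^p$ or $[\textbf{H}^{p'}_0(\mathrm{div},\Omega)]'$ cases handled before; I would rely on the corresponding statement established in \cite{ARMA} (Section 3.2.3, around the definition of $C_p$) or \cite{AMN}, since the very weak solution theory there already presupposes exactly this decoupling. A secondary, more routine point is checking that the kernel projection $\textbf{L}^{p}_{\sigma,\tau}(\Omega)\to\textbf{K}_{\tau}(\Omega)$ extends continuously to $[\textbf{T}^{p'}(\Omega)]'_{\sigma,\tau}$ so that $\textbf{f}_1,\textbf{f}_2$ inherit the $L^q$-in-time integrability and the norm bounds; this follows because $\textbf{K}_{\tau}(\Omega)$ is finite-dimensional, spanned by smooth fields, so pairing against a fixed basis is a bounded functional on $[\textbf{T}^{p'}(\Omega)]'$. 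Everything else is a transcription of the proofs of Theorems \ref{existinhensp} and \ref{Existinhsphdiv} with $(A_p,\textbf{X}_p,\textbf{L}^p_{\sigma,\tau})$ and $(B_p,\textbf{Y}_p,[\textbf{H}^{p'}_0(\mathrm{div},\Omega)]'_{\sigma,\tau})$ replaced by $(C_p,\textbf{Z}_p,[\textbf{T}^{p'}(\Omega)]'_{\sigma,\tau})$.
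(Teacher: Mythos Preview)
Your proposal is correct and follows essentially the same approach as the paper: the paper's proof explicitly states that one mirrors the argument of Theorem \ref{Existinhsphdiv}, using the semigroup generated by $-C_p$ together with the splitting (\ref{S2E14}) in the divergence-free case, and then invokes \cite{AMN} to obtain the pressure $\pi\in L^{q}(0,T;\,W^{-1,p}(\Omega)/\mathbb{R})$ that reduces the general data to $[\textbf{T}^{p'}(\Omega)]'_{\sigma,\tau}$. Your write-up simply unpacks this sketch in full (including the $\mathbf{curl}\,\textbf{u}_2$ step via \cite{JEE} and the finite-dimensionality of $\textbf{K}_\tau(\Omega)$), and the only cosmetic difference is that you decouple the pressure first whereas the paper treats the divergence-free case first.
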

\begin{proof}
The proof follows the same arguments as those in the proof of Theorem \ref{Existinhsphdiv}. In a first step one uses  $C_p$, the  analytic semigroup on $[\textbf{T}^{p'}(\Omega)]'_{\sigma, \tau}$ and (\ref{S2E14}) to prove  that (\ref{S3E1})-(\ref{S3E3}) are satisfied when $f\in [\textbf{T}^{p'}(\Omega)]'_{\sigma, \tau}$. In the general case, one  uses the results in  \cite{AMN} to obtain $\pi\in L^{q}(0,T;\,\,W^{-1,p}(\Omega)/\mathbb{R}) $ such that $(\textbf{f}-\mathrm{\textbf{grad}}\,\pi)\in
L^{q}(0,T;\,[\textbf{T}^{p'}(\Omega)]'_{\sigma, \tau})$, and the result follows using the first step.
\end{proof}

\noindent
\textbf{Acknowledgements} M.E. is supported by Grants MTM2014-52347-C2-1-R of DGES  and  IT641-13 of Basque Government. H. Al B. acknowledges the support of the GA\v CR (Czech Science Foundation) project  16-03230S  in the framework of RVO: 67985840.

\end{document}